 \newlength{\baseunit}               
\newtheorem{tm}{Theorem}[section]
\newtheorem{pr}[tm]{Proposition}
\newtheorem{co}[tm]{Corollary}
\newtheorem{definition}[tm]{Definition}
\newcommand{\Sym}{\operatorname{Sym}}
\newcommand{\LL}{\mathscr{L}}
\newcommand{\PP}{\mathbb{P}}
\newcommand{\QQ}{\mathbb{Q}}
\newcommand{\CC}{\mathbb{C}}
\newcommand{\FF}{\mathscr{F}}
\newcommand{\OO}{\sO}
\newcommand{\II}{\mathscr{I}}
\newcommand{\sO}{\mathcal{O}}
\newcommand{\Mn}{\mathcal{M}}
\newcommand{\Hh}{\mathcal{H}}
\begin{document}
\title[On a Zeuthen-type problem ]{{On a Zeuthen-type problem}
}
\author[Jared Ongaro]{Jared Ongaro}
\address{School of Mathematics, University of Nairobi , 00100-Nairobi, Kenya}
\email{ongaro@uonbi.ac.ke}
\thanks{This work was part of my Doctoral studies  supported by International Science Programme, Sweden}
\begin{abstract}
We show that every degree $d$ meromorphic function on a  smooth connected projective curve $C\subset \PP^2$ of degree $d>4$ is isomorphic to a linear  projection  from a point  $p\in \mathbb {P}^2 \setminus C$ to $\PP^1$. We then pose a Zeuthen-type problem for calculating the plane Hurwitz numbers. 
\end{abstract}
\maketitle
\section{Introduction}
\label{intro}
Consider $C\subset \PP^2$, a projective plane curve  of degree $d$. An important geometric method for studying $C$,  involves  meromorphic functions arising from linear projections of $C$ from a point $p\in\PP^2$. For instance, B. Riemann established in his famous work  \cite{BR57},  that the topological structure of  a smooth curve $C\subset \PP^2$   depends  entirely on the nature  of  branch types of the branched covering $\pi_p$ arising  from a linear projection.  To construct $\pi_p$, choose a point $p\in \PP^2$ then identify $\PP^1$ with the pencil of lines passing through $p\in\PP^2$.   If $p\in \PP^2\setminus C$,  then a generic line through $p$ meets the curve $C$ in $d$  distinct points. Thus, the linear projection from a point $p\in \PP^2\setminus C$  is a finite surjective morphism 
\begin{equation}\label{linp}\pi_p:C\longrightarrow \PP^1\end{equation}
of degree $d$. The morphism $\pi_p$ is a branched covering of $\PP^1$ and the points  of $\PP^1$ where several intersection points of the corresponding line  with $C$ coincide  are the branch points of $\pi_p$.\\ 

It is  a basic problem to characterize and enumerate those meromorphic functions $f$ on $C$ which can be realized as linear projections.  First, note that in general not  all meromorphic functions on a curve $C\subset \PP^2$  can be realized as such. However,  for $d> 4$ we  have the following result which we will prove.
\begin{tm}
\label{tm:1}
Suppose that $C \subset \PP^2$ is a smooth projective plane curve of degree $d > 4$. Then any meromorphic function $f : C \longrightarrow\PP^1$  of degree $d$ can be realized as a linear projection $\pi_p:C\longrightarrow \PP^1.$ 
\end{tm}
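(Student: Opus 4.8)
The plan is to reduce the statement to a single assertion about line bundles and then prove that assertion by a Brill--Noether/Cayley--Bacharach analysis together with a rigidity argument. Write $H=\sO_C(1)$ for the hyperplane bundle; from the ideal sequence $0\to\sO_{\PP^2}(1-d)\to\sO_{\PP^2}(1)\to\sO_C(1)\to 0$ and $H^1(\PP^2,\sO(1-d))=0$ one gets $h^0(H)=3$, so $H^0(C,H)$ is exactly the space of linear forms $\langle x_0,x_1,x_2\rangle$. A projection from a point $p\in\PP^2\setminus C$ is precisely the morphism attached to a $2$-dimensional subspace $W\subset H^0(H)$ whose base locus (the intersection of the two lines) is the point $p$, and $p\notin C$ is equivalent to $W$ being base-point-free on $C$. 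Hence it suffices to prove that the line bundle $L:=f^*\sO_{\PP^1}(1)$, which has $\deg L=d$ and carries the base-point-free pencil defining $f$, is isomorphic to $H$: once $L\cong H$, the pencil is a pencil of lines, base-point-freeness forces its base point off $C$, and $f$ agrees with $\pi_p$ up to an automorphism of $\PP^1$, i.e. is isomorphic to a linear projection.

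To prove $L\cong H$ I would argue on a general fibre. Let $D=f^{-1}(t)=p_1+\cdots+p_d\in|L|$ be a general (reduced) fibre, so $h^0(D)=h^0(L)\ge 2$. Using adjunction $\omega_C\cong\sO_C(d-3)$ and the isomorphism $H^0(\PP^2,\sO(d-3))\xrightarrow{\sim}H^0(C,\sO_C(d-3))$ (valid since $d-3<d$), Riemann--Roch identifies $h^1(D)=h^0(\omega_C-D)$ with the space of plane curves of degree $d-3$ passing through the $d$ points $p_i$. Since $g=\binom{d-1}{2}=h^0(\sO(d-3))$, this shows that $p_1,\dots,p_d$ impose only $d-h^0(D)+1\le d-1$ conditions on curves of degree $d-3$; that is, they fail to impose independent conditions on the adjoint system.

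The geometric heart of the argument is to turn this failure into collinearity: a Cayley--Bacharach/B\'ezout analysis of $d$ points that are special with respect to the degree-$(d-3)$ system should force at least $d-1$ of the $p_i$ to lie on a common line $\ell_t$. (In the first case $d=5$ this is transparent: the five points lie on a pencil of conics, whose base locus has degree $4$, so four of them are collinear; equivalently, if $L\not\cong H$ the base-point-free pencil trick applied to $H^0(H)\otimes W\to H^0(H\otimes L)$ gives $h^0(H\otimes L)\ge 6$, while Riemann--Roch gives $h^0(H\otimes L)=5$, a contradiction.) I expect this collinearity step to be the main obstacle, since for larger $d$ the base-point-free pencil trick with $M=H$ no longer closes, and one must control all the special configurations produced by Cayley--Bacharach and check that none other than ``$d-1$ points on a line'' survives on the irreducible curve $C$.

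Finally I would upgrade ``$d-1$ collinear'' to ``$d$ collinear'' by a rigidity argument. If for general $t$ exactly $d-1$ of the points lie on a line $\ell_t$ and $q_t$ is the remaining one, then $\ell_t\cdot C=p_1+\cdots+p_{d-1}+p'_t$ for a residual point $p'_t$, whence $L-H\sim q_t-p'_t$ is a fixed class $\delta$ for every $t$. The assignment $t\mapsto q_t$ is then a morphism $\PP^1\to C$, which must be constant because $g(C)=\binom{d-1}{2}\ge 1$; but a constant $q_t\equiv q$ is a common point of all general fibres, i.e. a base point of the base-point-free pencil $|L|$ --- a contradiction. Hence for general $t$ all $d$ points are collinear, $D\in|H|$, and therefore $L\cong H$, which completes the reduction and proves the theorem.
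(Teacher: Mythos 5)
Your set-up mirrors the paper's: reducing to $L:=f^*\OO_{\PP^1}(1)\cong\OO_C(1)$, and using adjunction $K_C\sim(d-3)H$ plus Riemann--Roch to translate $h^0(D)\ge 2$ for a fibre $D=p_1+\cdots+p_d$ into the statement that the $d$ points fail to impose independent conditions on $|\OO_{\PP^2}(d-3)|$, is exactly how the paper proceeds. But the step you yourself call ``the geometric heart'' --- that such a failure forces at least $d-1$ of the points to be collinear --- is precisely the paper's second theorem, i.e.\ the entire technical content of the result, and you do not prove it: you verify it only for $d=5$ (that verification, via the base-point-free pencil trick or the pencil of conics, is correct) and for general $d$ you only state that you ``expect'' a Cayley--Bacharach analysis to close. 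That is a genuine gap. For the record, the missing lemma does not need Cayley--Bacharach machinery: the paper proves it by an elementary construction. If some $p_0\in\Gamma$ lies on every curve of degree $d-3$ through the remaining points, let $\ell$ be the line through $p_0$ containing $j$ further points of $\Gamma$ and list the points off $\ell$ separately; pairing each point $p_i\in\ell\setminus\{p_0\}$ with a point off $\ell$ (and pairing the leftover points among themselves when $j=1$) produces a union of at most $d-3$ lines passing through all of $\Gamma\setminus\{p_0\}$ but not through $p_0$, a contradiction unless $j\ge d-2$, i.e.\ unless $d-1$ points of $\Gamma$ are collinear.

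Your handling of the residual case (exactly $d-1$ of the $p_i$ on a line $\ell_t$, with leftover point $q_t$) is genuinely different from the paper's and is workable: the paper argues on a single divisor, showing via the factorization of degree-$(d-3)$ curves through $d-1$ collinear points that $q$ would be a base point of $|D|$, whereas you argue over the whole pencil, noting $L-H\sim q_t-p'_t$ is a fixed class and that $t\mapsto q_t$ would give a nonconstant map $\PP^1\to C$ with $g(C)\ge 1$. If you keep that route, you should add the two small verifications it needs: that $q_t$ is uniquely determined for general $t$ (two distinct collinear $(d-1)$-element subsets of a fibre would share $d-2$ points on two lines, forcing $d\le 3$), and that $t\mapsto q_t$ is a rational map, hence a morphism since $\PP^1$ is a smooth curve. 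None of this, however, repairs the main omission: without a proof of the collinearity lemma for all $d>4$, the argument establishes the theorem only for $d=5$.
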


Hurwitz numbers \cite{Hur91, OP01} count  non-isomorphic meromorphic functions  on curves with fixed genus $g$  having a fixed branched profile. On the other hand, Zeuthen numbers \cite{Ze} count nodal  plane curves of a fixed degree $d$ and geometric genus  $g$ passing through  $a$ general points  and  tangent  to $b$  general lines in $\PP^2$, where $a+b=3d+g-1$. There is  a class of  Zeuthen numbers corresponding to what we call {\em plane Hurwitz numbers}. 
Zeuthen numbers have been interpreted  by R.Vakil  in the context of stable maps as  positive degree  Gromov-Witten invariants of $\PP^2$.  In section {\ref{Zeu}} below, following \cite{Va2}, we will sketch a derivation of a class of characteristic numbers of smooth plane curves which  correspond to calculating plane Hurwitz numbers.
\subsection*{ Acknowledgements}
I am grateful to my advisor Boris Shapiro for suggesting the problem.  I also want to thank R.~ Vakil for explaining some results in  \cite{Va2}, B.~ Davison, R. ~B\o gvad, and  R. Skjelnes   for useful discussions and comments. 
\section{ General Preliminaries}
\subsection{Notation and conventions}
The base field is  $\CC$,  the field of complex numbers and we denote by $\PP^n$ the $n-$dimensional projective space over $\CC$. By a {\em variety} we  mean  a reduced algebraic projective scheme over $\CC$.  The  term  {\em curve}  means  a complete connected variety of dimension $1$.  By a {\em smooth} or {\em nonsingular}  curve  we  implicitly assume that it is irreducible.\\ 

If $\Gamma \subset \PP^n$ is a closed  subscheme,  we  write $\OO_\Gamma$  for the structure  sheaf  over $\Gamma$ and  $\II_\Gamma \subset \OO_{\PP^n}$ denotes the  ideal sheaf of $\Gamma$.  Let  $D$ be a divisor on a curve $X$, then $|D|$ is the complete linear system of $D$.  We write  $K_X$ or $K$ for the  canonical class of a smooth curve $X$ and we denote by $|K_X|$ or $|K|$ for the  complete canonical series  respectively.  Suppose that  $\FF$  is a sheaf  of  vector spaces over a projective  scheme $X$. Then we  set 
$$h^{i}(\FF):=\dim \mathbf H^i(X, \FF)  \quad \text{and} \quad \chi(\FF):=\sum_{i=0}^{\dim X} (-1)^i h^i(\FF).$$
\subsection{General Definitions}
Let  $C$  be  a nonsingular curve of genus $g$.   A surjective morphism $f: C\to \PP^1$ is called a {\em meromorphic function}. More precisely, a meromorphic function  $f$  gives a finite morphism to the complex projective line $\mathbb P^1$  whose degree $d$ by definition is  the degree of the  morphism  $f: C\longrightarrow  \mathbb P^1$.  Thus for a meromorphic function  $f$ and any fixed point $q\in \mathbb P^1$ we have the divisor $f^{-1}(q)=\mu_1p_1+\ldots + \mu_np_n,$  where $p_1, \ldots, p_n$ are pairwise distinct points on $C$ and $\mu_1, \ldots, \mu_n$ are  positive integers summing  up to $d.$  In particular,  we can assume  $\mu_1\geq \ldots\geq \mu_n$.  The partition $(\mu_1,  \ldots, \mu_n)\vdash d$ is called the {\em branch type} of $f$ at a point $q$.   For instance,  $f$ is unbranched over $q$, if the branch type equal to $(1, 1, \ldots, 1)$. The branch type for a {\em simple} branch point is $(2, 1, \ldots, 1)$. The set of all branch points is called the {\em branching locus} of $f$.   In this way, every nonconstant meromorphic function on a curve $C$ is a {\em branched covering}. The basic problem is then the classification and  enumeration of such maps $f: C\to \PP^1$ for a given $g$ and $d$ for a  prescribed branch type over each branch point  of $f$. The set of  all branch types for $f$ will be  called  {\em branch profile} of $f$.   
\subsection{Hurwitz Numbers}
\label{hu}
Branched coverings were first described  in the famous paper \cite{BR57} by Riemann who developed the idea of  representing nonsingular curves  as branched coverings of $\PP^1$ in order to study their moduli. However,  systematic investigation of branched coverings  was initiated by Hurwitz in \cite{Hur91, Hur02} more than thirty years later. 
\begin{definition}
\label{def:eqcover}
Let $f_1: C_1\to \PP^1, $ $f_2: C_2\to \PP^1$ be two branched coverings. Then $f_1$ and $f_2$  are said to be {\bf equivalent}  if there exists an  isomorphism  $h:C_1\to C_2$ such that the diagram
\begin{center}
\begin{tikzcd}[column sep=0.2cm,row sep=0.4cm]
C_1 \arrow{rr}{h} \arrow[swap]{dr}{f_1}& &C_2 \arrow{dl}{f_2}\\
& \PP^1& 
\end{tikzcd} 
\end{center}
commutes.
\end{definition}

Hurwitz observed that  if we fix the degree $d$ of the  branched coverings $f: C\to \PP^1$ and the number $w$ of  branch points  and branch profile, then equivalence classes of branched coverings form a covering space $\mathscr H_{d,g}$ (we  suppress the branch profile to avoid notational clutter) of the  configuration space of $w$ points in $\PP^1$.  These parameter spaces $\mathscr H_{d,g}$ are  called {\em Hurwitz spaces}.  The fundamental group of the configuration space of $w$ branch points in $\PP^1$ acts on the fibers of $\mathscr H_{g,d}$  and the orbits of this action are in one-one correspondence with the connected components of  $\mathscr H_{g,d}$.  A very special case is when all the branch points are simple.  In this case  there is only one orbit. This  follows that the  corresponding Hurwitz space  is an irreducible smooth algebraic variety (see\cite{Ful69}) called the  {\em small Hurwitz space} denoted   by 

\begin{equation}
\label{eq:hs}
    \begin{array}{ccc}
       \Hh_{g, d}=\left\{\mbox{ $f: C \longrightarrow \PP^1$}\ %
        \bigg |\begin{array}{c}
         \ \mbox{ {$C$ has genus $g$ and  $f$ is a branched covering}   }   \\
             \mbox{{of degree $d$  with  $w$ simple branch points}  } \\

        \end{array}
                  \right\}
                  \bigg/ \sim. \quad
    \end{array}
\end{equation}
It turns out  that  $\Hh_{d,g}$  is a covering space.   In fact it is shown in \cite{Hur91} that $\Hh_{g, d}$  comes with  a natural  finite \'{e}tale  covering   
\begin{equation}
\label{HF1}
\begin{aligned}
\Phi: \Hh_{g, d}\longrightarrow & \Sym^w \PP^1\backslash \Delta\\
(f:C\longrightarrow\PP^1)\longmapsto & \{\text{branch locus of $f$}\}
\end{aligned}
\end{equation}
where $\Sym^w\PP^1$ is the space of unordered $w-$tuples of points  in $\PP^1$ and $\Delta$ is the discriminant hypersurface corresponding to sets of cardinality  strictly less than $w$.  The Riemann-Hurwitz formula tells  us that the degree of the  branch divisor for $f: C\longrightarrow \PP^1$ in $\Hh_{g, d}$,  equals  $w=2g+2d-2$. The morphism $\Phi$ is called the {\em branching morphism} and its degree  is called the {\em simple Hurwitz number} $h_{d,g}$. Since the map $\Phi$ is finite-to-one, the  branch points can be regarded as local coordinates on  $\Hh_{g, d}$ and it follows that the dimension of the Hurwitz space is equal to $w=2g+2d-2$.   
\section{ Proof  of Theorem \ref{tm:1}}
Given  a smooth curve $C$, specifying  a meromorphic function $f : C \longrightarrow\PP^1$  of degree $d$ on $C$ corresponds to identifying an effective degree $ d$  divisor $D$  of $f$  such that the linear system $|D|$ has no base points and $\dim |D|\geq 1$.
\begin{definition}
Let $D=p_1+\ldots+p_d$ be a divisor on a smooth curve   $C$.    If $|D|$ has no base point and $\dim|D|=1$, we say that $D$ {\bf moves  in a linear pencil} $|D|$. Equivalently, we have a meromorphic function of degree $d$
$$f:C\longrightarrow \PP^1$$
such that $f^\ast \OO_{\PP^1}(1)=\LL$, where $\LL\cong \OO_C(D)$ for $\OO_C(D)$  the invertible sheaf over $C$ determined by the divisor $D$ and ${h}^0(\LL)=2$,
\end{definition}
{\bf Remark.} The assertion of Theorem \ref{tm:1} fails   if $d =3$ and $d=4$. \\
{\bf Example.} If $C \subset \PP^2$ is a smooth projective quartic, then there is a meromorphic function on $C$ of degree $4$ which is  not isomorphic to a linear  projection $\pi_p$.  Indeed let $D=p_1+\ldots +p_4$ be a divisor given by any 4 points on $C$ such that no three of them are collinear. In our case  $h^0(\LL) = 2$ by Riemann-Roch's theorem. Recall that an invertible sheaf  $\LL$ on $C$ is  base point free if $h^0(\LL)-h^0(\LL(-p))=1$ for all $p\in C$. Then  $h^0(\LL(-p))=\deg(\LL(-p))-g+1=1$  again by Riemann-Roch. So we obtain $h^0(\LL(-p))=1=h^0(\LL)-1$ and we conclude that  the linear system $|p_1 + p_2 + p_3 + p_4|$ has no base points.  Hence  the four points move  in a linear pencil but a meromorphic function specified by this divisor on a smooth quartic cannot be realized as a linear projection as this $4$ points are not in a line. 
\\

The proof of Theorem \ref{tm:1} will be  derived from the following result. 
\begin{tm}\label{tm:2}
Let $\Gamma=\{p_1,\ldots,p_d\}\subset \PP^2$,  be  any collection of $d\geq5$ distinct  points.  If $\Gamma$ fails to impose independent linear conditions on $|\OO_{\PP^2}(d -3)|$  then at least $d -1$ of the points are collinear. 
\end{tm}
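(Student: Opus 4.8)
The plan is to deduce Theorem \ref{tm:2} from a more symmetric statement in which the degree is a free parameter. Writing $m=d-3$, the hypothesis $d\ge 5$ is exactly the inequality $d\le 2m+1$, so it suffices to prove the following claim: \emph{if $\Gamma$ is a set of $n\le 2m+1$ distinct points in $\PP^2$ that fails to impose independent conditions on $|\OO_{\PP^2}(m)|$, then $m+2$ of the points of $\Gamma$ are collinear.} Taking $n=d$ and $m=d-3$ recovers the theorem, since then $m+2=d-1$. I would prove this reformulation by induction on $m$, the base case $m=1$ being elementary: three points fail to impose independent conditions on lines precisely when they are collinear, and one or two points never fail.

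For the inductive step, let $L$ be a line meeting $\Gamma$ in the maximal number $k$ of points, set $\Gamma_L=\Gamma\cap L$ and $\Gamma'=\Gamma\setminus L$, so that $|\Gamma'|=n-k$. If $k\ge m+2$ we are already done, so assume $k\le m+1$. The key tool is residuation along $L$: multiplication by a linear form cutting out $L$ yields the exact sequence of sheaves
$$0\to \II_{\Gamma'}(m-1)\xrightarrow{\,\cdot L\,} \II_\Gamma(m)\to \II_{\Gamma_L/L}(m)\to 0,$$
where $\II_{\Gamma_L/L}$ denotes the ideal sheaf of $\Gamma_L$ inside $L\cong\PP^1$. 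Because $k\le m+1$, the $k$ distinct points of $\Gamma_L$ impose independent conditions on forms of degree $m$ on $\PP^1$, whence $h^0(\II_{\Gamma_L/L}(m))=(m+1)-k$. Taking global sections and using left-exactness of $H^0$ gives the inequality $h^0(\II_\Gamma(m))\le h^0(\II_{\Gamma'}(m-1))+(m+1)-k$.

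Now I would feed in the failure hypothesis. Failure of $\Gamma$ means $h^0(\II_\Gamma(m))\ge h^0(\OO_{\PP^2}(m))-n+1$; combining this with the displayed inequality and the identity $h^0(\OO_{\PP^2}(m))-(m+1)=h^0(\OO_{\PP^2}(m-1))$, I obtain $h^0(\II_{\Gamma'}(m-1))\ge h^0(\OO_{\PP^2}(m-1))-(n-k)+1$. Thus $\Gamma'$ itself fails to impose independent conditions on $|\OO_{\PP^2}(m-1)|$, and since $n-k\le 2(m-1)+1$ the inductive hypothesis applies and produces a line $L'$ containing $m+1$ points of $\Gamma'$. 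But these $m+1$ points lie off $L$, whereas $L$ carries $k$ of the points of $\Gamma$; by maximality $k\ge m+1$, so the two disjoint sets force $\Gamma$ to contain at least $(m+1)+(m+1)=2m+2$ points, contradicting $n\le 2m+1$. Hence $k\le m+1$ is impossible, and $m+2$ of the points are collinear.

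The routine verifications—exactness of the residuation sequence and the fact that at most $m+1$ distinct points on $\PP^1$ impose independent conditions on forms of degree $m$—are standard. The one place demanding care, and the main obstacle, is organizing the bookkeeping so that the induction actually closes: one must track the exact loss $(m+1)-k$ coming from $L$, check that the residual count $n-k$ stays within the range $\le 2(m-1)+1$ in which the inductive hypothesis is available, and confirm that $L$ and $L'$ contribute disjoint point sets so that the final tally genuinely exceeds $2m+1$.
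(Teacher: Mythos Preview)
Your argument is correct and proves a strictly stronger statement than the paper needs. The paper, by contrast, takes a completely elementary and constructive route: after a cosmetic shift of indices (to $d+1$ points and degree $d-2$) it singles out a ``bad'' point $p_0$ with the property that every degree $d-2$ curve through the remaining points also passes through $p_0$, and then builds, by hand, a union of lines through pairs of the remaining points that avoids $p_0$; a short case analysis on how many points lie on a line through $p_0$ shows this is possible in degree $\le d-2$ unless almost all points are collinear. So the paper never touches a residuation sequence or an inductive hypothesis, while your proof packages the problem into the classical lemma ``$n\le 2m+1$ points failing on $|\OO_{\PP^2}(m)|$ forces $m+2$ collinear'' and runs a clean induction via the exact sequence $0\to\II_{\Gamma'}(m-1)\to\II_\Gamma(m)\to\II_{\Gamma_L/L}(m)\to 0$. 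Your route is more conceptual and yields the sharper general bound; the paper's route is more hands-on and self-contained, requiring no cohomological machinery beyond the definition of ``imposing conditions.''
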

To see why the proof of Theorem \ref{tm:1} follows from  that of Theorem \ref{tm:2}, recall from the introduction that to specify   a meromorphic function of degree $d$ on $C$, we specify a divisor $D$ of degree $d$ on $C$ such that the linear system $|D|$ has no base points and $\dim |D|\geq 1$, where
$$\dim|D|:=h^0(D)-1.$$
In the case  the divisor $D$ on  $C$ has a linear system as above, we  say that  $D$ moves.
\begin{definition}
The finite set $\Gamma=\{p_1, \ldots, p_d\}\subset \PP^2$ of distinct points {\em imposes linear independent conditions} on plane curves of degree $m$  if for every point $P\in \Gamma$ there exist plane curves of degree $m$ that contains $\Gamma\setminus P$ and does not contain the point $P\in \Gamma$. 
\end{definition}
Consider the subset  $\Gamma\subset \PP^2$  as a closed zero-dimensional subscheme of $\PP^2$. Then  we have  the standard exact sequence of sheaves 
\begin{equation}\label{exact}0\longrightarrow\II_{\Gamma}\otimes \OO_{\PP^2}(m)\longrightarrow \OO_{\PP^2}(m)\longrightarrow \OO_{\Gamma}(m) \longrightarrow 0 ,\end{equation}
where $\II_\Gamma\subset \OO_{\PP^2}$ is  the ideal sheaf of the zero dimensional variety  $\Gamma$. Note that $\displaystyle\OO_\Gamma(m)\cong \oplus_{i=1}^d\OO_{p_i}\cong \CC^d,$ and that surjectivity of  $$\alpha: \mathbf{H}^0(\PP^2, \OO_{\PP^2}(m))\longrightarrow \mathbf{H}^0(\Gamma, \OO_{\Gamma}(m)) $$ exactly means that there is for each $p_i$, $i=1,\ldots, d$ a plane curve  of degree $m$ that contains $\Gamma\setminus \{p_i\}$ but not $p_i$. Hence $\Gamma\subset \PP^2$ fails to impose independent conditions on curves of degree $m$ if and only if $\alpha$ is not surjective. Namely if and only if 
$$h^0(\II_\Gamma\otimes \OO_{\PP^2}(m))> h^0(\OO_{\PP^2}(m))-d=\frac{(m+1)(m+2)}{2}-d.$$
Equivalently since $\mathbf{H}^1(\PP^2, \OO_{\PP^2}(m))=0$,   $\Gamma$ fails to impose independent conditions on $|\OO_{\PP^2}(m)|$  if we have  $h^1\big( \II_{\Gamma}\otimes \OO_{\PP^2}(m)\big)> 0.$ \\

Let $D=p_1+\ldots+p_d$ be a divisor of degree $d$ on a smooth curve  $C\subset \PP^2$.  A criterion for determining  when $D$ moves  is  given by the Riemann-Roch  theorem for curves. Denote by $H$ the divisor of a general linear section.  The adjunction formula tells us that  $$K_C \sim (d-3)H.$$ 
By the  B\'ezout theorem the degree of the divisor $(d-3)H$ is  equal to $d(d-3)$.  So we obtain that
$$2g-2=(d-3)d \quad\text{or}\quad g=\frac{(d-1)(d-2)}{2}.$$
The Riemann-Roch formula implies that
$$h^0(D)=  d -g +1+h^0\big(K_C -D\big), $$
and hence $ \dim |D|\geq 1$ if and only if 
\begin{equation}\label{eq:inn}\dim |K_C -D|\geq \frac{(d-1)(d-2)}{2}-d .\end{equation}

Now the ideal sheaf $\II_{C}$ of $C$ in $\PP^2$ is isomorphic to $ \OO_{\PP^2}(-C)$,  and so  $$\mathbf{H}^0(\PP^2, \II_{C}\otimes\OO_{\PP^2}(d-3))\cong \mathbf{H}^1(\PP^2, \II_{C}\otimes\OO_{\PP^2}(d-3))=0$$
since $\mathbf{H}^0(\PP^2, \OO_{\PP^2}(-3))\cong \mathbf{H}^1(\PP^2, \OO_{\PP^2}(-3))=0.$ Twisting the  exact sequence
$$0\longrightarrow\II_{C}\longrightarrow \OO_{\PP^2}\longrightarrow \OO_{C}\longrightarrow 0 $$
by $\OO_{\PP^2}(d-3)$, we find that $\mathbf{H}^0(\PP^2, \OO_{\PP^2}(d-3))\cong \mathbf{H}^0(C, \OO_{C}(d-3)).$ Furthermore we have that  $$\mathbf{H}^0(\PP^2, \II_{\Gamma}\otimes \OO_{\PP^2}(d-3))=\ker\big (\mathbf{H}^0(\PP^2, \OO_{\PP^2}(d-3))\longrightarrow \mathbf{H}^0(\Gamma, \OO_{\Gamma}(d-3))\big).$$ 
On the other hand, $K_C \sim (d-3)H$ and $\OO_C(D)$ is the ideal of $D$ in $C$ which implies  that
$$\mathbf{H}^0(C, \OO_{C}(K_C-D))=\ker\big (\mathbf{H}^0(C, \OO_{C}(d-3))\longrightarrow \mathbf{H}^0(\Gamma, \OO_{\Gamma}(d-3))\big),$$
so we find that  $h^0( \OO_{C}(K_C-D))=h^0\big( \II_{D}\otimes \OO_{\PP^2}(d-3)\big)$.
Hence  (\ref{eq:inn})  is equivalent to the  inequality
\begin{equation}\label{impose} h^0\big( \II_{D}\otimes \OO_{\PP^2}(d-3)\big)> \frac{(d-1)(d-2)}{2}-d .\end{equation}
 In other words, the divisor $D=p_1+\ldots+p_d$ satisfies $\dim |D|\geq 1$ if and only if  the set $\Gamma=\{p_1, \ldots, p_d\}$ fails to impose  independent conditions on the canonical linear system $|K_C|$. We will now see that we may use this to derive Theorem \ref{tm:1} from Theorem \ref{tm:2}.  \\

To complete the proof  of Theorem~\ref{tm:1}, it suffices to show  that either all the $d$ points of $D$ are collinear, or  if only the $d-1$ points of $D$ lie on a line  then  the $d$-th point  is a base point of the linear system $|D|$. In the first case $D \sim H$ and we are done. In the second case, suppose that $D = p_1,\ldots, p_{d-1} + q$, where the points $p_1,\ldots, p_{d-1}$ lie on a line $\ell$ and $q\notin \ell$. We must show that  $q$ is a base point of the linear system $|D|$ or equivalently  that we have $$\dim|p_1 + \ldots + p_{d-1}|= \dim |p_1 + \ldots + p_{d-1}+q |.$$ But as the degree of the divisor $p_1 + \ldots + p_{d-1}$ is  equal to $\deg D-1$, the Riemann-Roch then implies that it is enough to show that  the following  equality: 
 \begin{equation}\label{eq:2} \dim |K_C- p_1 - \ldots - p_{d-1}-q| = \dim |K_C- p_1 - \ldots - p_{d-1}|-1\end{equation}
holds. Since $\deg C= d$, we can write the divisor cut by $C$ on $\ell$  as  $C\cdot\ell =p_1 + \ldots + p_{d-1}+b$, where $b \neq q$ because $q\notin \ell$. If a curve $C_1$ of degree $d -3$ passes through $d -1$ collinear points $p_1,  \ldots , p_{d-1}$, it must contain $\ell$ as a component. Thus, the linear system in equation (\ref{eq:2}) on left-hand side
  $$|K_C-p_1 - \ldots - p_{d-1}-q|\cong|\II_q\otimes \OO_{\PP^2}(d-4)| ,$$
whereas the linear system  on right-hand side  in  (\ref{eq:2})$$|K_C-p_1 - \ldots - p_{d-1}|\cong |\OO_{\PP^2}(d-4)|$$
which follows from the fact that $\dim |\II_q\otimes \OO_{\PP^2}(d-4)|=\dim|\OO_{\PP^2}(d-4)|-1$.
  And  this implies (\ref{eq:2}), which  completes the proof. 
\qed \\

 It is worthy to remark that if  $p_1, \ldots, p_{d-1}$ are distinct points in $\PP^2$, then they will always impose independent conditions on curves of degree $d\geq 4$. In particular,  the divisor $D=p_1+\ldots+p_{d-1}$  moves in a linear pencil if and only if the points  $p_1, \ldots, p_{d-1}$  lie on a line. It follows that for a smooth plane curve $C\subset \PP^2$ of degree $d$, there is no nonconstant meromorphic function  of degree less than $d-1$.
\section{ Proof of Theorem \ref{tm:2}} 

To shorten the proof of theorem \ref{tm:2}, we first reformulate it below in a slightly different  but equivalent form.

\begin{tm}\label{tm:2}
Let $\Gamma=\{p_0,\ldots,p_d\}\subset \PP^2$,  be  any collection of $d+1\geq5$ distinct  points.  If $\Gamma$ fails to impose independent linear conditions on $|\OO_{\PP^2}(d -2)|$  then at least $d$ of the points in $\Gamma$ are collinear. 
\end{tm}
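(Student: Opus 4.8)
The plan is to deduce the statement from a single general lemma about points in the plane, proved by induction on the degree along the classical line of residuation. Write $N=d+1$ for the number of points and $m=d-2$ for the degree, so that $N=m+3$. As recorded in the discussion preceding Theorem \ref{tm:2}, $\Gamma$ fails to impose independent conditions on $|\OO_{\PP^2}(m)|$ precisely when $h^1(\II_\Gamma\otimes\OO_{\PP^2}(m))>0$; this is the form of the hypothesis I will use throughout. I will prove the following.

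\emph{Lemma.} If $\Gamma\subset\PP^2$ consists of $n\le 2m+1$ distinct points with $h^1(\II_\Gamma\otimes\OO_{\PP^2}(m))>0$, then at least $m+2$ of them lie on a line. Granting this, the theorem is immediate: with $N=m+3$ the bound $n\le 2m+1$ reads $m+3\le 2m+1$, i.e. $m\ge2$, which holds because $d+1\ge5$ forces $d\ge4$ and hence $m=d-2\ge2$; and the conclusion ``$m+2=d$ collinear points'' is exactly what is asserted.

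To prove the Lemma I induct on $m$. For the inductive step, let $\ell=\{L=0\}$ be a line meeting $\Gamma$ in the greatest possible number $k$ of points, and set $\Gamma'=\Gamma\setminus(\Gamma\cap\ell)$, a set of $n-k$ points; note $k\ge2$ since $\Gamma$ contains at least two points (a single point always imposes independent conditions). The basic tool is the residuation exact sequence obtained by multiplication by $L$,
\[
0\longrightarrow \II_{\Gamma'}\otimes\OO_{\PP^2}(m-1)\xrightarrow{\ \cdot L\ }\II_{\Gamma}\otimes\OO_{\PP^2}(m)\longrightarrow \OO_{\ell}(m-k)\longrightarrow 0,
\]
whose cokernel is the ideal sheaf of the $k$ points $\Gamma\cap\ell$ inside $\ell\cong\PP^1$, twisted by $\OO_\ell(m)$, i.e. $\OO_{\PP^1}(m-k)$. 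Its $\mathbf{H}^1$ vanishes as soon as $m-k\ge-1$, that is $k\le m+1$.

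Now distinguish two cases. If $k\ge m+2$, then $\ell$ already carries $m+2$ collinear points and we are done. If instead $k\le m+1$, the vanishing just noted and the long exact cohomology sequence give
\[
h^1\big(\II_{\Gamma'}\otimes\OO_{\PP^2}(m-1)\big)\ \ge\ h^1\big(\II_{\Gamma}\otimes\OO_{\PP^2}(m)\big)\ >\ 0,
\]
and since $k\ge2$ we have $|\Gamma'|=n-k\le 2m-1=2(m-1)+1$, so the inductive hypothesis applies to $\Gamma'$ in degree $m-1$ and produces $(m-1)+2=m+1$ points of $\Gamma'$ on a line $\ell'$. The step I expect to be the main obstacle is to rule this case out by a counting argument that is valid only under the strict bound $n\le 2m+1$. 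By construction these $m+1$ points lie in $\Gamma'$, hence off $\ell$, so $\ell'\neq\ell$ and the sets $\Gamma\cap\ell$ and $\Gamma\cap\ell'$ are disjoint. Maximality of $k$ gives $k\ge|\Gamma\cap\ell'|\ge m+1$, forcing $k=m+1$; but then $\Gamma$ contains the $(m+1)+(m+1)=2m+2$ distinct points of $(\Gamma\cap\ell)\sqcup(\Gamma\cap\ell')$, contradicting $n\le 2m+1$. Thus the case $k\le m+1$ cannot occur, and we are always in the first case.

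It remains to treat the base case $m=1$, which is a direct inspection of the evaluation map $\mathbf{H}^0(\OO_{\PP^2}(1))\to\CC^{\,n}$ with $n\le3$: two distinct points always impose independent conditions on lines, while three fail exactly when they are collinear, giving $3=m+2$ collinear points. With the base case and the inductive step in hand the Lemma, and hence the theorem, follows. I emphasize that the inequality $n\le 2m+1$ is used exactly once, in the counting contradiction above, and that it is sharp: at the value $n=2m+2$ it breaks down, and indeed $2m+2$ points on a smooth conic then fail to impose independent conditions on $|\OO_{\PP^2}(m)|$ while no $m+2$ of them are collinear, which is precisely why the hypothesis $d+1\ge5$ cannot be relaxed.
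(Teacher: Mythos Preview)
Your proof is correct and takes a genuinely different route from the paper's. The paper argues directly and elementarily: it fixes a ``dependent'' point $p_0$ (one such that every curve of degree $d-2$ through $\Gamma\setminus\{p_0\}$ also passes through $p_0$), takes a line $\ell$ through $p_0$ containing the \emph{minimal} number $j+1$ of points of $\Gamma$, and then explicitly builds a union of at most $\max\{j,d-j\}$ lines through $\Gamma\setminus\{p_0\}$ that avoids $p_0$; a short case analysis on $j$ forces $j\ge d-1$. Your argument instead proves the stronger classical ``$2m+1$ lemma'' by induction on $m$ via the residuation exact sequence, choosing a line carrying the \emph{maximum} number of points and passing to the residual set in degree $m-1$. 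Your approach buys a more general statement and a clean cohomological mechanism that generalizes readily; the paper's approach buys complete elementarity, with no cohomology or induction needed.

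One small wrinkle in your write-up: the assertion that $\Gamma\cap\ell$ and $\Gamma\cap\ell'$ are disjoint does not follow merely from $\ell'\neq\ell$, since the single point $\ell\cap\ell'$ could a priori lie in $\Gamma$. This is harmless, however: either observe that in that case $|\Gamma\cap\ell'|\ge m+2$, contradicting the maximality of $k\le m+1$; or, more simply, run the count with $\Gamma'\cap\ell'$ (which has at least $m+1$ points and is manifestly disjoint from $\Gamma\cap\ell$) in place of $\Gamma\cap\ell'$.
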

\begin{proof}
By assumption there exists at least one point (without loss of generality) say $p_0\in  \Gamma$  such that any curve of degree $d-2$ passing through the points  in $\Gamma\setminus p_0$ also passes through $p_0$. Note that if we have a curve $C$ of degree $n\leq d-2$ that passes through $\Gamma\setminus p_0$, then it follows by assumption  that $C$ also must pass through $p_0$.\\

Let $p_0, p_1\ldots,p_j$ be the minimal number of points  in $\Gamma$ lying on a line  $\ell$ containing the point $p_0$. Rename the remaining points as $q_1, \ldots q_{d-j}$. By construction, any line through a point  $p_i\neq p_0$ and a point $q_{i}$, will not pass through $p_0$. We now construct a curve $C$ being a product of such lines. We let $\ell_i$ be the line  through $p_i$ and $q_i$ if $1\leq i\leq \min\{j, d-j\}$. For the possible remaining points,  we either let $\ell_i$ denote the lines through $p_i$ and $q_1$ (if $d-j<i\leq j$) or the line through $q_i$ and $p_1$ (if $j<i\leq d-j$). The curve
$$C=\ell_1\ldots \ell_n \qquad (\text{where}\; n=\max\{j, d-j\})$$
passes through all the points of $\Gamma\setminus p_0$, but not though $p_0$.

 If we have $2\leq j\leq d-2$ then  we get that the degree $n\leq d-2$, which is a contradiction to our assumption.\\

If we have $j=1$, then any line $\ell'$ through two points $\Gamma\setminus p_0$ would not contain $p_0$. Observe that, to cover $\Gamma\setminus p_0$,   we need at most $n\leq d/2$ lines  $\ell_1', \ldots,\ell_n' $ if $d$ is even, and  at most $n\leq (d+1)/2$ lines to cover $\Gamma\setminus p_0$, if $d$ is odd. Note that $d\geq 5$ is equivalent  to $(d+1)/2\leq d-2$, and if $d=4$ then we have  that $d/2\leq d-2$. Hence for any $d$, in our  range, we have  the curve  
$$C'=\ell_1' \ldots \ell_n'$$
of  degree $n\leq d-2$ that passes through all points of $\Gamma\setminus p_0$, but  not through $p_0$. This is impossible by assumption.\\

Finally, we are left  with the only possibility that $j> d-2$. However if $j\geq d-1$, then we have at least $j+1\leq d$ point $p_0, \ldots, p_j$ aligned on the line $\ell$. This completes the proof.
\end{proof}

\section{Plane Hurwitz numbers and Zeuthen  numbers}
\label{Zeu}
\subsection{Plane Hurwitz Numbers}
Generally in calculating  Hurwitz numbers, we make no  reference to the embedding of curves. For example, one can not expect for instance a branched covering of $\PP^1$ whose domain is  genus $2$ to be planar and smooth, since  a smooth plane curve of degree $d$, has  $g=\binom {d-1}{2}$.   Additionally, we expect that not all curves of  genus  $g=\binom {d-1}{2}$ can be embedded  in $\PP^2$ as smooth curves. For instance, among all smooth  curves of   genus $3$ (for $d=4$),  there are  hyperelliptic curves, which are not planar.\\

Fix $d>0$; the space parametrizing  all degree $d$ algebraic  curves in $\PP^2$ is  a complete   system $|\OO_{\PP^2}(d)|$, which forms a projective space $$\PP (\mathbf{H}^0(\PP^2, \OO_{\PP^2}(d)))\cong \PP^{\mathbf N},$$ where $\mathbf N=\binom{d+2}{2}-1=d(d+3)/2$. In particular,  the set of all smooth plane curves of a given  degree $d$ is an open  subset of $\PP^\mathbf N$. The   group $\PP \mathbf{GL} (3,\CC)$ of  all  projective automorphisms of $\PP^2$ acts on $\PP^\mathbf N$ in a natural way. 
 Of  particular interest is the subgroup  $\mathcal G_p\subset \PP \mathbf{GL} (3,\CC)$  fixing $p$ and preserving the pencil of lines through $p$.  
 Given a smooth curve $C\subset\PP^2$, for instance  if $p=[0:1:0]\in \PP^2\setminus C$ for some choice of  coordinate  system of $\PP^2$ an element of the  group $\mathcal G_p$ has  the  form  
$$\resizebox{.4\hsize}{!}{$g=\mbox{\small $\begin{bmatrix}g_0& 0&0\\g_1& g_2&g_3\\ 0& 0&g_0\end{bmatrix}$}\quad \text{with}\quad g_0g_2\neq 0$}.$$
The group of automorphisms $\mathcal G_p$ acts equivalently on $\PP^{\mathbf N}$  keeping the branching points of the projection $\pi_p:C\to \PP^1$  fixed. Recall from Definition \ref{def:eqcover}, that two branched coverings   $\pi_p^1:C_1\to \PP^1$ and $\pi_p^2:C_2\to \PP^1$  are  called equivalent if there exists an isomorphism  $g:C_1\to C_2$ such that $\pi_p^2\circ g=\pi_p^1.$  Then we have:
\begin{pr}
\label{pr:group}
Let $C_1, C_2\subset \PP^2$ be two smooth projective  plane curves of the same degree $d > 1$  and not passing through $p\in \PP^2$. Two projections  $\pi_{p}^1:C_1\longrightarrow \PP^1$ and $\pi_{p}^2:C_2\longrightarrow \PP^1$  are equivalent if and only if there exists an automorphism $g\in \mathcal G_p$ such that $g(C_1)=C_2.$
\end{pr}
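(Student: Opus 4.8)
The plan is to reduce both implications to the linear algebra of the three\nobreakdash-dimensional space of linear forms restricted to the two curves, exploiting that each $C_i$ is embedded in $\PP^2$ by the \emph{complete} linear system of $\OO_{C_i}(1)$. After a projective change of coordinates I may assume $p=[0:1:0]$, so that $\pi_p$ is the morphism $[x_0:x_1:x_2]\mapsto[x_0:x_2]$ and a general element of $\mathcal G_p$ has the matrix form displayed in the text. A one\nobreakdash-line computation with that matrix shows that belonging to $\mathcal G_p$ is equivalent to the identity $\pi_p\circ g=\pi_p$ on $\PP^2\setminus\{p\}$. The backward implication is then immediate: if $g\in\mathcal G_p$ satisfies $g(C_1)=C_2$, then $h:=g|_{C_1}\colon C_1\to C_2$ is an isomorphism and $\pi_p^2\circ h=\pi_p\circ g|_{C_1}=\pi_p|_{C_1}=\pi_p^1$, so the two coverings are equivalent in the sense of Definition~\ref{def:eqcover}.

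For the forward implication I first record the structural input. For $d>1$ the restriction sequence
$$0\longrightarrow \OO_{\PP^2}(1-d)\longrightarrow \OO_{\PP^2}(1)\longrightarrow \OO_{C_i}(1)\longrightarrow 0,$$
together with $\mathbf H^0(\PP^2,\OO_{\PP^2}(1-d))=\mathbf H^1(\PP^2,\OO_{\PP^2}(1-d))=0$, identifies $\mathbf H^0(C_i,\OO_{C_i}(1))$ with the $3$\nobreakdash-dimensional space $V=\langle x_0,x_1,x_2\rangle$ of linear forms; hence each $C_i$ is embedded by $|\OO_{C_i}(1)|$. Moreover $\OO_{C_i}(1)=(\pi_p^i)^\ast\OO_{\PP^1}(1)$, since $\pi_p^i$ is cut out by the base\nobreakdash-point\nobreakdash-free pencil $\langle x_0,x_2\rangle$.

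Now assume the coverings are equivalent via an isomorphism $h\colon C_1\to C_2$ with $\pi_p^2\circ h=\pi_p^1$. Then $h^\ast\OO_{C_2}(1)=(\pi_p^2\circ h)^\ast\OO_{\PP^1}(1)=(\pi_p^1)^\ast\OO_{\PP^1}(1)=\OO_{C_1}(1)$, so $h^\ast$ is a linear automorphism of $V$ and hence the comorphism of a unique $g\in\PP\mathbf{GL}(3,\CC)$. By construction $g|_{C_1}=h$ (both send $x\in C_1$ to the point with coordinates $x_i(h(x))=(h^\ast x_i)(x)$), and therefore $g(C_1)=C_2$. It remains to show $g\in\mathcal G_p$, which is the heart of the argument. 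I would compare divisors: the form $x_0$ vanishes on $C_1$ along the fibre $(\pi_p^1)^\ast[0:1]$, while $h^\ast x_0$ is a section of the same bundle $\OO_{C_1}(1)$ whose divisor is $h^\ast\big((\pi_p^2)^\ast[0:1]\big)=(\pi_p^1)^\ast[0:1]$; since two sections of a line bundle on a connected curve with equal zero divisors are proportional, $h^\ast x_0=\lambda x_0$, and likewise $h^\ast x_2=\mu x_2$. The relation $\pi_p^2\circ h=\pi_p^1$ reads $[\lambda x_0:\mu x_2]=[x_0:x_2]$ on $C_1$, forcing $\lambda=\mu$ because $x_0x_2\not\equiv 0$. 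Writing $h^\ast x_1=g_1x_0+g_2x_1+g_3x_2$ (an arbitrary element of $V$), the matrix of $g$ acquires exactly the form defining $\mathcal G_p$, with $g_0=\lambda$ and $g_0g_2\neq 0$ by invertibility of $h^\ast$.

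The main obstacle is precisely this last step. A priori $h$ only forces $g$ into the stabilizer of $p$ in $\PP\mathbf{GL}(3,\CC)$; what pins it down to the smaller group $\mathcal G_p$ is the fact that the two distinguished forms $x_0,x_2$ cut out the fibres of $\pi_p$, which is what makes them scale by a common factor $\lambda$. Phrasing this via the divisor comparison, rather than by matching quadratic forms in $\mathbf H^0(C_1,\OO_{C_1}(2))$, has the advantage of working uniformly for every $d>1$, including the conic case $d=2$, where a direct comparison of quadrics would be obstructed by the defining equation of $C_1$.
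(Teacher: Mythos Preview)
Your proof is correct and shares the paper's overall architecture: both arguments use that each $C_i$ is embedded by the \emph{complete} linear system $|\OO_{C_i}(1)|$ to promote the abstract isomorphism $h$ to a projective automorphism $g\in\PP\mathbf{GL}(3,\CC)$, and then verify $g\in\mathcal G_p$ from the commutativity $\pi_p^2\circ h=\pi_p^1$. The difference lies only in that last verification. The paper argues geometrically: for a generic line $\ell\ni p$ the map $h$ carries the fibre $C_1\cap\ell$ onto $C_2\cap\ell$, and since these are $d>1$ collinear points the extended automorphism must satisfy $g(\ell)=\ell$; intersecting two such lines yields $g(p)=p$. You instead fix coordinates at the outset and compute the matrix of $g$ by comparing zero divisors of the distinguished sections $x_0,x_2$, then read off $\lambda=\mu$ directly from $[\lambda x_0:\mu x_2]=[x_0:x_2]$. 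Your route is more explicit, supplies the cohomological justification $h^0(\OO_{C_i}(1))=3$ that the paper merely asserts, and makes the uniform validity for $d=2$ visible; the paper's route is coordinate-free and a line or two shorter.
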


%

\begin{proof}
Let $C_1, C_2\subset \PP^2$ be  smooth projective curves not passing through $p\in \PP^2$.  If there exists an  automorphism $g\in \mathcal G_p$ such that  $C_2=g(C_1)$,  then the morphisms $\pi_{p}$ and $\pi_{p}'$  are equivalent by an isomorphism given by $g$. For the `only if ' direction,  suppose that $\pi_{p}^1$ and $\pi_{p}^2$ are equivalent and that this equivalence is determined by an isomorphism $g: C_1\to C_2.$ For each line $\ell\ni p$ the isomorphism $g$ maps $C_1\cap \ell$ to $C_2\cap \ell;$ thus, $g$ maps hyperplane sections of $C_1$ to hyperplane sections of $C_2.$ Since both $C_1$ and $C_2$  are embedded in $\PP^2$ by complete linear system of hyperplane sections $\mathbf{H}^0(\PP^2, \OO_{C_i}(1))$, for $i=1,2$, this implies that $g$ is induced by projective automorphism $\PP \mathbf{GL} (3,\CC)$. To complete the proof,  it  only remains to check that $g\in \mathcal G_p$;  to that end, consider a generic line $\ell\ni p$; this line  intersects 
$C_i$ for $i=1,2$ at $d=\deg C_i>1$ points  and this points are mapped by $g$ to $d$ distinct points on $\ell.$ So $g(\ell)=\ell$ for the generic line and thus for any $\ell\ni p.$ If $\ell_1, \ell_2$ containing $p$ then 
$$g(p)=g(\ell_1\cap \ell_2)=g(\ell_1)\cap g(\ell_2)=\ell_1\cap \ell_2=p.$$
Hence $g\in \mathcal G_p$ as  expected  and this completes the proof. 
\end{proof}

A {\em generic projection} of smooth curve $C\subset \PP^2$ from a point $p\in \PP^2$ which is not on a bitangent line or a flex line we obtain a linear projection $\pi_p: C\to \PP^1$ with only simple branch points. This leads us to the orbit space parametrizing  all generic linear projections. Denote  this space of generic linear projections by:
\begin{equation}
\label{eq:hspm}
    \begin{array}{ccc}
       \mathcal P\Hh_{d}= \left\{\mbox{ $\pi_p : C \to \PP^1$}\ %
        \bigg |\begin{array}{c}
         \ \mbox{{$\pi_p$ is a simple linear projection  from  }  }   \\
             \mbox{{$p\in \PP^2\setminus C$  of a  smooth curve $C\subset \PP^2$}  } \\

        \end{array}
                  \right\}
                  \bigg/ \sim. \quad

    \end{array}
\end{equation}

where $\sim$ is  the  equivalence of projections from a point $p\in\PP^2$ up to  the $\mathcal G_p$- action.\\

Note that for $g=\binom{d-1}{2}$,  we have a natural inclusion  $\mathcal P\Hh_{d}\subseteq \Hh_{d,g}$  of small Hurwitz spaces for $d>1$. The information about  the  dimension of $\mathcal P\Hh_{d}$ is a direct consequence of proposition \ref{pr:group} we summarize  as follows.
\begin{co}
\label{pr:dim}
The dimension of the space  $\mathcal P\Hh_{d}$  is equal to $\mathbf N-3=\frac{d(d+3)}{2}-3$. 
\end{co}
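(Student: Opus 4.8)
The plan is to realize $\mathcal{P}\Hh_{d}$ as the quotient of a dense open subset of the parameter space $\PP^{\mathbf N}$ by the group $\mathcal{G}_p$, and then to read off the dimension from that of a generic orbit.

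First I would fix once and for all the point $p=[0:1:0]$ together with the identification of the pencil of lines through $p$ with $\PP^1$ given by $[x:y:z]\mapsto[x:z]$. Since $\PP\mathbf{GL}(3,\CC)$ acts transitively on the points of $\PP^2$, every simple linear projection $\pi_{p'}:C'\to\PP^1$ from an arbitrary point $p'\notin C'$ is equivalent, as a branched cover, to one of the form $\pi_p:C\to\PP^1$ with $C$ a smooth curve of degree $d$ avoiding $p$ and having only simple branching: one moves $p'$ to $p$ by a projection-compatible element of $\PP\mathbf{GL}(3,\CC)$, and such an element exists and is unique up to $\mathcal{G}_p$. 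Hence it suffices to describe the classes of projections from the fixed point $p$. Let $U\subset\PP^{\mathbf N}$ denote the locus of smooth degree-$d$ curves $C$ with $p\notin C$ for which $\pi_p$ has only simple branch points; this is a nonempty Zariski-open, hence dense, subset, so $\dim U=\mathbf N$. By Proposition \ref{pr:group}, two such projections attached to $C_1,C_2\in U$ are equivalent precisely when $C_2=g(C_1)$ for some $g\in\mathcal{G}_p$. Therefore $\mathcal{P}\Hh_{d}$ is identified with the orbit space $U/\mathcal{G}_p$.

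Next I would compute $\dim\mathcal{G}_p$. From the displayed matrix form a general element of $\mathcal{G}_p$ depends on the four parameters $g_0,g_1,g_2,g_3$ subject only to $g_0g_2\neq0$; passing to $\PP\mathbf{GL}(3,\CC)$ removes the one-parameter subgroup of scalar matrices (those with $g_1=g_3=0$ and $g_0=g_2$), so $\dim\mathcal{G}_p=4-1=3$. For an algebraic group action the dimension of the quotient equals $\dim U$ minus the dimension of a generic orbit, and a generic orbit has dimension $\dim\mathcal{G}_p$ minus the dimension of a generic stabilizer. Thus the claim reduces to showing that the generic $\mathcal{G}_p$-stabilizer in $U$ is finite.

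This last point carries the real content and is the step I expect to be the main obstacle. An orbit always has dimension at most $\dim\mathcal{G}_p=3$, so it remains to rule out positive-dimensional stabilizers for a generic curve. I would invoke the classical fact that for $d\geq4$ the automorphism group of a smooth plane curve of degree $d$ is finite and is induced by $\PP\mathbf{GL}(3,\CC)$; in particular the stabilizer $\{g\in\mathcal{G}_p:g(C)=C\}$ is a subgroup of $\Aut(C)$ and hence finite for every $C\in U$, and is trivial for the generic $C$. Alternatively, by upper semicontinuity of stabilizer dimension on the irreducible variety $U$, it would be enough to exhibit a single smooth curve in $U$ with finite $\mathcal{G}_p$-stabilizer. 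Either way the generic orbit has dimension $3$, so passing to a dense open $\mathcal{G}_p$-invariant subset of $U$ on which a geometric quotient exists yields
$$\dim\mathcal{P}\Hh_{d}=\dim U-3=\mathbf N-3=\frac{d(d+3)}{2}-3,$$
as claimed. (Note that the appeal to finiteness of $\Aut(C)$ requires $d\geq4$; for very small $d$, e.g. conics, the stabilizer can be positive-dimensional and the formula need not hold, which is consistent with the paper's standing interest in $d>4$.)
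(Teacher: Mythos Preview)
Your argument is correct and is exactly the fleshed-out version of what the paper intends: the paper gives no explicit proof, stating only that the dimension formula is a direct consequence of Proposition~\ref{pr:group}, and your identification of $\mathcal{P}\Hh_d$ with $U/\mathcal{G}_p$ together with the count $\dim\mathcal{G}_p=3$ and the finiteness of generic stabilizers is precisely how that consequence is drawn. Your added care about the stabilizer (via finiteness of $\Aut(C)$ for $d\geq 4$) and the caveat for small $d$ supply details the paper leaves implicit.
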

The number of branch points  of a  generic  projection $\pi_p:C\to \PP^1$  of a smooth curve of degree $d$ from $p\in \PP^2\setminus C$ is determined by the Riemann-Hurwitz formula as $w=d(d-1)$. We refer to the number of $3$-dimensional $\mathcal G$-orbits with the same set of $w$ tangents lines  as the {\em $d$-th plane Hurwitz number}  and denote it by $\mathfrak h_{d}$.  Thus,  to compute $\mathfrak h_{d}$   as indicated in (\ref{HF1}),  we need  to calculate the degree of the branch morphism
\begin{equation}
\begin{aligned}
\mathcal P\Hh_{d}\longrightarrow & \Sym^w\PP^1\backslash \Delta,\\
\end{aligned}
\end{equation}
restricted to its image. Notice that   by Corollary \ref{pr:dim} the $\dim \mathcal P\Hh_{d} <d(d-1)$ for $d\geq 4$. Next we will give two  examples of known plane Hurwitz numbers.
\subsection*{Degree 3-plane Hurwitz Numbers}
The first nontrivial case involves  projections of smooth plane cubics.  The remark following Theorem \ref{tm:1} asserts that  if $d = 3$ not all  meromorphic function of  degree $3$ on smooth plane cubics  are realizable as projections.  However,
degree $3$ simple plane Hurwitz numbers coincides with the usually Hurwitz number. 
 Namely, over $w=6$  pairwise distinct points on the projective line $\PP^1$ there are exactly $40$ three-dimensional orbits  of smooth cubics branched over them, see \cite{Hur91}.  To see this, recall that Hurwitz numbers count branched covering up to equivalence, the equivalence of plane Hurwitz with the usual Hurwitz number is a consequence of the fact that every meromorphic function of degree $3$ on a smooth cubic is a composition of a group shift of $C$ followed by a linear  projection from $p\in \PP^2\setminus C$.   This is a well-known consequence of the fact  that any  smooth plane cubic curve  is an abelian  group. We give  the details below. 
\begin{pr}
Every meromorphic function of degree $3$ on a smooth cubic curve $C\in \PP^2$  can be represented  as a composition of a group shift on $C$ by a fixed point on $C$ with a linear projection from a point $p\in \PP^2.$  
\end{pr}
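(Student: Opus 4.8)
The plan is to exploit the group structure on $C$. Since $C$ is a smooth plane cubic it has genus $g=\binom{2}{2}=1$; fix once and for all an inflection point $O\in C$ as the origin, making $(C,\oplus)$ an abelian group. With this choice the classical collinearity criterion holds: writing $s(D):=\bigoplus_i P_i\in C$ for the group-theoretic sum of an effective divisor $D=\sum_i P_i$, the line sections of $C$ are exactly the members of the complete linear system $|\OO_C(1)|$, and these are precisely the effective degree-$3$ divisors $D$ with $s(D)=O$. So the whole problem is to arrange, by a translation, that the fibres of $f$ become such sum-zero divisors.

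Let $f:C\to\PP^1$ be an arbitrary degree-$3$ meromorphic function, and let $D_q=f^{-1}(q)$ be its fibres. These form a base-point-free pencil of effective degree-$3$ divisors, all linearly equivalent; by Abel's theorem on the elliptic curve $C$ they share a common sum $s:=s(D_q)$, independent of $q$. I will now apply a group shift $t_a:C\to C$, $x\mapsto x\oplus a$. Under it the fibres of $f\circ t_a$ are $(f\circ t_a)^{-1}(q)=t_a^{-1}(D_q)$, whose sum is $s\ominus 3a$. The first key step is to kill this sum: because multiplication by $3$ is surjective on the elliptic curve $C$ (divisibility of the group), I can choose $a$ with $3a=s$, so that every fibre of $f\circ t_a$ has sum $O$ and is therefore a line section of $C$.

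It remains to recognise the resulting pencil as a projection. The fibres of $f\circ t_a$ now form a pencil inside $|\OO_C(1)|\cong\PP^2$; under the canonical isomorphism $(\PP^2)^\vee\xrightarrow{\sim}|\OO_C(1)|$, $\ell\mapsto \ell\cap C$ (valid because $C$ is embedded by its hyperplane system), this pencil corresponds to a line in the dual plane, i.e.\ to the pencil of all lines through a single point $p\in\PP^2$. Thus $f\circ t_a$ sends $x$ to the unique line of this pencil through $x$, which is exactly the linear projection $\pi_p$ (after identifying the target $\PP^1$ with the pencil of lines through $p$). Since $f$, hence $f\circ t_a$, is an honest degree-$3$ morphism, its pencil is base-point-free; were $p\in C$, every line of the pencil would meet $C$ at $p$ and $p$ would be a common base point, a contradiction. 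Hence $p\in\PP^2\setminus C$, and writing $b=-a$ I obtain $f=\pi_p\circ t_b$, a linear projection precomposed with a group shift.

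The only genuinely delicate point is the first step: it is special to cubics that a degree-$3$ map and the hyperplane bundle have the same degree $3$, and it is special to genus $1$ that the divisibility of the group lets me realise the required line-bundle isomorphism $t_a^{\ast}\OO_C(1)\cong f^{\ast}\OO_{\PP^1}(1)$ by a translation. Both features fail for $d>3$, which is why the higher-degree Theorem~\ref{tm:1} must be proved by the entirely different route of Theorem~\ref{tm:2}. A minor loose end is that $f$ and $\pi_p\circ t_b$ a priori agree only as unparametrised pencils, hence up to an element of $\Aut(\PP^1)$; this is harmless here since the branch loci coincide, which is all that is needed for the plane Hurwitz count.
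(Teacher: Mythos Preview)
Your argument is correct and follows essentially the same route as the paper: both proofs choose a translation $t_a$ with $3a$ equal to the common Abel--Jacobi sum of the fibres so that the shifted fibres become line sections, and then identify the resulting pencil with a projection from a point. Your version is in fact a bit more complete, since you verify all fibres at once via Abel's theorem and you check that the centre $p$ lies off $C$, whereas the paper works explicitly with the two fibres over $0$ and $\infty$ and writes the projection as a ratio of linear forms.
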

\begin{proof}
Let $C$ be a smooth projective  cubic and let  $f:C\longrightarrow \PP^1$ be a meromorphic function of degree $3$. If we  write  $f^{-1}(0)= z_1+z_2+z_3$, $f^{-1}(\infty)= p_1+p_2+p_3$ for the zero divisor and polar divisor of  $f$ respectively (where $z_i$ and $p_i$ for all $i=1,2,3$ are not necessarily distinct). The linear equivalence of divisors $f^{-1}(0) \sim f^{-1}(\infty)$ implies the equality
$$p_1+p_2+p_3= z_1+z_2+z_3$$
as divisors, where ``$+$'' denotes the addition from group law on the cubic curve. Fix  a point $P_0\in C$ such that $p_1+p_2+p_3+3P_0=0$ and define 
$$Q_i=p_i+P_0,\quad \text{and}\quad R_i=z_i+P_0\quad \text{for all $i=1,2,3.$}$$
Then we have 
\begin{align*}
\resizebox{.4\hsize}{!}{$Q_1+Q_2+Q_3=p_1+p_2+p_3+3P_0$}&=0 \\
\resizebox{.4\hsize}{!}{$R_1+R_2+R_3=z_1+z_2+z_3+3P_0$}&=0.
\end{align*}
In particular, $\{Q_1, Q_2, Q_3\}$ and   $\{R_1, R_2, R_3\}$ lie on  distinct lines in $\PP^2,$ Since otherwise these sets would be equal and so $f^{-1}(0)=f^{-1}(\infty)$, which is impossible. Denote the  lines  given  by the translates $\{Q_1, Q_2, Q_3\}$ and   $\{R_1, R_2, R_3\}$ by  $\ell_1\subset \PP^2$ and $\ell_2\subset \PP^2$ respectively. 
%
%
If  $l_1(x,y,z)$  and $l_2(x,y,z)$ are  equations for the lines $\ell_1$ and $\ell_2$, the  meromorphic  function given by composition of the group shift and projection  is the quotient $l_1/ l_2$: $f(P-P_0)=\frac{\ell_1(P)} {\ell_2(P)}\iff f(P)=\frac{\ell_1(P+P_0)} {\ell_2(P+P_0)}  $, (where $P=(x, y, x)$) after possibly multiplying with a constant using the fact that a meromorphic function without poles will be constant. 
\end{proof}
\subsection*{Degree 4-plane Hurwitz Numbers}
The case $d=4$ is more exciting. Note that  the space parametrizing projections  $\mathcal P\Hh_{4}$  has dimension $\frac{4(4+3)}{2}-3=11$. As branched coverings, this $11$-dimensional  family $\mathcal P\Hh_{4}$  admits  a natural inclusion into the small Hurwitz space $\Hh_{4, 3}$  defined in (\ref{eq:hs})  which is a smooth irreducible variety of dimension $12$. The inclusion $\mathcal P\Hh_{4}\subset \Hh_{4, 3}$ implies that the branch locus defines an hypersurface $\mathbf B\subset \Sym^{12} \PP^1$. R.~Vakil in  \cite{Va} has computed its degree to be equal  to 3762. Moreover,  he  establishes that there are  essentially 120  smooth plane quartic  branched over  admissible $12$ points in $\PP^1$. Thus,  it follows that  the plane Hurwitz number of degree $4$ is
\begin{equation}
\mathfrak h_4=120\times \frac{(3^{10}-1)}{2}.
\end{equation}   
The corresponding Hurwitz number is known to be equal to $h_{3,4}=255\times \frac{(3^{10}-1)}{2}.$

\subsection{Zeuthen numbers}
This notion of plane Hurwitz numbers has  a strong analogy to the special case of Zeuthen's classical problem which asks to calculate the number of irreducible plane  curves  of degree $d>0$ and geometric genus $g\geq 0$ passing through   $a$ general points  and $b$ tangent lines  in $\PP^2$, where $a+b=3d+g-1$.  More precisely,
assuming that the only singularities of an irreducible curve $C\subset \PP^2$ are $\delta$ nodes,  since each node reduces the freedom of the curve by $1$, we expect the set of irreducible  degree $d$ curves with $\delta$ nodes  depends on 
$$\dim|\OO_{\PP^2}(d)|-\delta=\frac{d(d+3)}{2}-\delta=3d+g-1$$
parameters.   Indeed, for  all fixed integers $d> 0$ and $g\geq 0$ as first observed by F. Severi \cite{Se} and proved by J. Harris \cite{JH86}, the Severi variety ${V}_{g, \delta}$ parametrizing irreducible plane curves of degree $d$ with $\delta$ nodes  is a quasiprojective variety of dimension $3d+g-1$.  It follows that for a fixed $d>0, g\geq 0$ the numbers $N_d(g)$ of curves  passing through  $3d+g-1$ general points is  finite and does not depend on the generic configuration of points  chosen.   
This $N_d(g)$ number is commonly referred to as {\em Severi degree} of plane curves. \\


In general, fix integers $d>0$ and $a, b, g\geq 0$.  The number of irreducible curves of geometric genus $g$ and degree $d$ passing through $ a$ general points and tangent to $b$ general lines in $\PP^2$ is finite provided $a+b=3d+g-1$. These numbers  are called {\em characteristic numbers } of plane curves and we denote them by $N_g(a, d)$.     The question of calculating {characteristic numbers} is the {\em classical problem of Zeuthen} and thus we usually refer to the numbers  $N_g(a, d)$ as {\em Zeuthen Numbers}. In \cite{Ze},  H.G. Zeuthen calculated the characteristic numbers of  smooth curves in $\PP^2$ of degree at most $4$ and  \cite{Va2} has verified Zeuthen's results using  modern results on moduli spaces of stable maps.
\subsection{Homological interpretation of Zeuthen numbers }
Let $\overline{\Mn}_{g,0}(\PP^2,d)$ be the Kontsevich moduli space  of maps  to $\PP^2$ of fixed degree $d>0$ and arithmetic genus $g\geq0$.
Consider the open substack of maps of smooth curves ${\Mn}_{g,0}(\PP^2,d)$. The closure of ${\Mn}_{g,0}(\PP^2,d)$  is a unique component of  $\overline{\Mn}_{g,0}(\PP^2,d)$  of dimension $3d+g-1$ we denote  by $\overline{\Mn}_{g,0}(\PP^2,d)^\dagger$.  The Zeuthen number $N_g(a,d)$ can be interpreted in the language  of stable maps. \\ 

Let $\alpha$ and $\beta$ denote the divisors in $\overline{\Mn}_{g,0}(\PP^2,d)^\dagger $ representing classes of a point and a line respectively.  The characteristic number $N_g(a, d)$  is given by the degree of $\alpha^a\beta^b$ and is denoted by $\alpha^a\beta^b\cap \big[\overline{\Mn}_{g,0}(\PP^2,d)^\dagger\big]$. For example, it is known there is a unique smooth cubic through $9$ general points, then we will write   
$\alpha^9\cap {\big[\overline{\Mn}_{1,0}(\PP^2,3)^\dagger\big]} =1.$ \\

The following existence result  is the key point for this interpretation. 
\begin{pr}
There  exist  two divisors $\alpha$ and $\beta$ such that the  number $N_g(a, d)$  is
$\alpha^a\beta^b\cap \big[\overline{\Mn}_{g,0}(\PP^2,d)^\dagger\big].$
\end{pr}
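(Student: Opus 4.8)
The plan is to construct $\alpha$ and $\beta$ as pushforwards of an incidence locus and a tangency locus along the universal curve, and then to argue that on the distinguished component $\overline{\Mn}_{g,0}(\PP^2,d)^\dagger$ these two divisor classes intersect in reduced points corresponding exactly to the curves counted by $N_g(a,d)$. Since the moduli space carries no marked points, I would work throughout with the universal curve $\pi \colon \mathcal{C} \to \overline{\Mn}_{g,0}(\PP^2,d)^\dagger$ equipped with the universal evaluation map $\mathrm{ev} \colon \mathcal{C} \to \PP^2$, rather than with evaluation maps from marked points.

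First I would fix a general point $q \in \PP^2$ and form $\mathrm{ev}^{-1}(q) \subset \mathcal{C}$, which has codimension $2$. Because $\pi$ has one-dimensional fibers, the pushforward $\alpha := \pi_*[\mathrm{ev}^{-1}(q)]$ is a divisor class on $\overline{\Mn}_{g,0}(\PP^2,d)^\dagger$, and its general member is a map whose image contains $q$ at a single smooth point; hence $\alpha^a$ records the maps passing through $a$ general points. For $\beta$ I would encode tangency to a fixed general line $\ell \subset \PP^2$ as a double-contact condition on $\mathcal{C}$. Let $s_\ell$ be the section of $\mathrm{ev}^*\sO_{\PP^2}(1)$ cutting out $\mathrm{ev}^{-1}(\ell)$; over a single map $f \colon C \to \PP^2$ its restriction to the fiber is the section of $f^*\sO_{\PP^2}(1)$ with divisor $f^*\ell$, and $f$ is tangent to $\ell$ precisely when this section acquires a zero of order at least two. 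I therefore define $T_\ell \subset \mathcal{C}$ to be the common vanishing locus of $s_\ell$ and its derivative along the fibers of $\pi$, a section of $\mathrm{ev}^*\sO_{\PP^2}(1)\otimes\omega_\pi$; this $T_\ell$ has codimension $2$, and I set $\beta := \pi_*[T_\ell]$, a divisor whose general member is a curve simply tangent to $\ell$ at one point. The numerical identity $a+b = 3d+g-1 = \dim \overline{\Mn}_{g,0}(\PP^2,d)^\dagger$ then guarantees that $\alpha^a\beta^b$ is a zero-cycle, whose degree is the pairing $\alpha^a\beta^b \cap [\overline{\Mn}_{g,0}(\PP^2,d)^\dagger]$.

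The hard part will be enumerative transversality: I must show that for sufficiently general points $q_1,\dots,q_a$ and lines $\ell_1,\dots,\ell_b$ the support of $\alpha^a\beta^b$ lies in the open locus of maps with smooth irreducible domain, away from the boundary of nodal degenerations and multiple covers, that each such point occurs with multiplicity one, and that these points are in bijection with the curves enumerated by $N_g(a,d)$. Following Vakil \cite{Va2}, I would control the boundary by a stratum-by-stratum dimension count, verifying that the $a+b$ imposed conditions are generically disjoint from every excess-dimensional stratum, and I would establish reducedness by a local analysis of the incidence and double-contact loci at a smooth curve satisfying the conditions transversally. Once transversality and reducedness are secured, the degree of $\alpha^a\beta^b$ literally counts the degree $d$, geometric genus $g$ curves through the $a$ points and tangent to the $b$ lines, which is exactly $N_g(a,d)$, proving the existence of the asserted divisor classes.
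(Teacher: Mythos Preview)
The paper does not prove this proposition at all: its entire proof is the single sentence ``See \cite{Va1}, Theorem 3.15.'' Your proposal therefore goes well beyond what the paper supplies, and what you sketch is essentially the construction Vakil carries out in the cited reference: $\alpha$ is the incidence divisor of maps whose image meets a fixed general point, $\beta$ is the tangency divisor to a fixed general line, and the enumerative content comes from a Kleiman--Bertini style transversality argument showing that for general points and lines the intersection $\alpha^a\beta^b$ is supported on the locus of maps with smooth irreducible source and is reduced there.

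Two small remarks. First, your pushforward construction of $\alpha$ and $\beta$ from the universal curve is correct and standard; your identification of the second defining section of $T_\ell$ as living in $\mathrm{ev}^*\sO_{\PP^2}(1)\otimes\omega_\pi$ is exactly the relative first-jet computation. Second, the delicate step you flag---ruling out boundary contributions by a stratum-by-stratum dimension count---is genuinely nontrivial and is the substance of Vakil's argument; note that the paper cites \cite{Va1} (the rational and elliptic paper) rather than \cite{Va2} for this, though the relevant transversality machinery appears in both. Since you ultimately defer this step to Vakil as well, your proposal and the paper's one-line citation amount to the same thing, with yours being a faithful unpacking of what the citation contains.
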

\begin{proof}
See \cite{Va1},  Theorem 3.15.
\end{proof}
We finish with an open problem.  As above let $\overline{\Mn}_{g,0}(\PP^2,d)^\dagger$ be  the closure of  the open substack ${\Mn}_{g,0}(\PP^2,d)$ of maps of  smooth curves of degree $d$. Among the  boundary divisors representing the closure of  loci  of maps  (see  \cite{Va1} for precise descriptions) of $\overline{\Mn}_{g,0}(\PP^2,d)^\dagger$, we have a  divisor $\mathbb I_d$ the closure of  the locus  of  degree $d:1$ maps of  smooth curves of degree $d$ into a line in $\PP^2$. Such generic maps are necessarily branched at $d(d-1)$ points  by Riemann-Hurwitz formula.  Thus the divisor  $\mathbb I_d$  enumerates a special class of Zeuthen numbers whose calculation is related to that of Hurwitz numbers. Namely, the Zeuthen numbers $\beta^{3d+g-2}[ \mathbb I_d]$  for $g=\binom{d-1}{2}$. For instance, R.~Vakil  in \cite{Va2} calculates that  $\beta^8[ \mathbb I_3]=40 \times 210$ and $\beta^{13}[\mathbb I_4]=120\cdot 2535$.  It makes sense to consider the divisor  $\mathbb I_d$  up to the $\mathcal G_p$-action. 
\section{Problem}
Consider the orbit space $\overline{\Mn}_{g,0}(\PP^2,d)^\dagger/\mathcal G_p$. Is there a natural homology class $ \beta \in\mathbf H_{2(3d+g-4)}\big (\overline{\Mn}_{g,0}(\PP^2,d\big )^\dagger/\mathcal G_p, \QQ)$ such that $\mathfrak h_d=\beta^{3d+g-5}\cap \big[\mathbb I_d/\mathcal G_p\big] $?

\end{document}